\newtheorem{mydef}{Definition}
\newtheorem{mytheo}{Theorem}
\newtheorem{myprop}{Proposition}
\title{Non-linear Gradient Algorithm for Parameter Estimation: Extended version}
\author{
	Juan G. Rueda-Escobedo and Jaime A. Moreno
		\thanks{J.~G. Rueda-Escobedo and J.~A. Moreno are with Eléctrica y Computación, Instituto de Ingeniería, Universidad Nacional Autónoma de 
			México, 04510 México D.F., Mexico, JRuedaE@iingen.unam.mx; JMorenoP@ii.unam.mx}
}
\begin{document}

\maketitle
\thispagestyle{empty}
\pagestyle{empty}

\begin{abstract}
	Gradient algorithms are classical in adaptive control and parameter estimation. For instantaneous quadratic cost functions they lead to a linear 
	time-varying dynamic system that converges exponentially under persistence of excitation conditions. In this paper we consider (instantaneous) 
	non-quadratic cost functions, for which the gradient algorithm leads to non-linear (and non Lipschitz) time-varying dynamics, which are homogeneous 
	in the state. We show that under persistence of excitation conditions they also converge globally, uniformly and asymptotically. Compared to the 
	linear counterpart, they accelerate the convergence and can provide for finite-time or fixed-time stability.
\end{abstract}

\section{Introduction}
	This work is an extended version of \cite{RuedaMoreno15}. In this paper the proof of the claims in \cite{RuedaMoreno15} are presented in the 
	appendices. This proof are omitted in the first work because of its length, instead, the space is used to discuss and clarify the results.\\
	
	A classical linear parametric model is given by $y(t):=u^T(t)\theta_0$, where $\theta_0\in\mathbb{R}^n$ are the parameters, $u(t)\in\mathbb{R}^n$ is 
	the regressor and $y(t)\in\mathbb{R}$ is the measured signal. Output $y(t)$ along with an estimate of the parameters $\hat{\theta}(t)$ is used to 
	build the output estimation error $e(t):=u^T(t)\hat{\theta}(t)-y(t)$, which can be rewritten as $e(t)=u^T(t)x(t)$, where $x(t):=\hat{\theta}(t)-
	\theta_0$ is the parameter estimation error. The aim is to use $e$ to drive $x$ to zero. Since Persistence of Excitation (PE) of $u(t)$ is equivalent 
	to the Uniform and Complete Observability of the associated linear dynamical system $\dot{\theta}_0 = 0,\, y(t)=u^T(t)\theta_0$ \cite{Anderson_77}, 
	\cite{Mor_Nar_77}, it is a necessary condition to assure uniform and robust convergence of any algorithm. In particular, for the Linear Gradient 
	Descent and the Recursive Least Square Methods PE is a necessary and sufficient condition for exponential convergence \cite{Anderson_77}, 
	\cite{Mor_Nar_77}.\\
	
	In fact, the use of correction terms linear in $e$ cannot provide for convergence faster than exponential. So, if accelerated convergence is desired, 
	algorithms using nonlinear correcting terms are required. In recent years, the use of homogeneous systems and homogeneous correction terms for 
	control and observation purposes have been very successful in providing finite time and fixed time convergence \cite{BhaBer05,AndPra08}. Furthermore, 
	homogeneous higher order sliding modes (HOSM) provide for discontinuous correction terms, which provide not only finite time convergence but also 
	insensitivity to (matched, bounded) perturbations \cite{Lev05, Mor11, BerEfi14}. Homogeneity has been important for these results, since it provides 
	useful properties: e.g.  local asymptotic stability is equivalent to global finite time stability for systems of negative homogeneity degree 
	\cite{LiapFunc_Bacciotti_Rosier, Lev05, BerEfi13}.\\
		
	Motivated by these results we propose in this work non-linear estimation algorithms, with non-linear correction terms in $e$, that lead to 
	time-varying dynamical systems. The proposed schemes can be obtained as the negative gradient of an instantaneous convex non smooth function of the 
	output estimation error. The resulting (error) systems are homogeneous (or homogeneous in the bi-limit) in the estimation error $x$ 
	\cite{LiapFunc_Bacciotti_Rosier}, \cite{Peut_Aeyels_99}, but time-varying. Unlike time invariant homogeneous systems, time-varying systems do not 
	possess such strong properties. For example, neither local asymptotic stability implies global asymptotic stability, nor negative homogeneity degree 
	implies finite time convergence \cite{Peut_Aeyels_99}. Some local asymptotic stability results for time-varying systems homogeneous in the state have 
	been obtained, when the homogeneity degree is zero \cite{MCloskey_97}, and for positive homogeneity degree \cite{Peut_Aeyels_99}, using 
	averaging techniques.\\
	
	We show in this paper, in straightforward a direct manner, that the proposed algorithms converge globally and asymptotically under the PE condition 
	of the regressor. When $n=1$, the algorithm is able to converge in finite-time and it is also able to estimate a time-varying parameter. Moreover, 
	adding homogeneous terms of positive degree the estimation error converges in fixed-time, i.e. the convergence time is upper bounded by a constant 
	independent of the initial estimation error. For $n>1$ global, uniform and asymptotic stability can be assured and acceleration for large initial 
	conditions can be obtained. 
	
\section{Motivation and Problem Statement}
	The estimation problem may be regarded as a minimization problem. In this context a common cornerstone is to set a convex cost function of the output 
	estimation error. In this work we choose the following structure for the cost function
	\begin{align}
		J(\hat{\theta})_p:=\frac{1}{p+1}\big|u^T(t)\hat{\theta}(t)-y(t)\big|^{p+1}.\nonumber	
	\end{align}	 
	Here the exponent $p>0$ is a parameter to be chosen. The term $-\frac{\partial}{\partial \hat{\theta}}J(\hat{\theta})_p$ shows the direction in which 
	the parameter estimated needs to change. With this idea in mind we propose the following algorithm
	\begin{small}
		\begin{align}
			\dot{\hat{\theta}}(t)&:=-\frac{\partial}{\partial \hat{\theta}} J(\hat{\theta})_p\nonumber\\
			&=-\big|u^T(t)\hat{\theta}(t)-y(t)\big|^p\text{sign}\big(u^T(t)\hat{\theta}(t)-y(t)\big)u(t).\nonumber
		\end{align}
	\end{small}
	For the sake of readability let us define $\lceil w\rfloor^p:=|w|^p\text{sign}(w)$. With this convention the algorithm is 
	rewritten as
	\begin{align}
		\label{Alg}
		\dot{\hat{\theta}}(t)=-\lceil u^T(t)\hat{\theta}(t)-y(t)\rfloor^pu(t).
	\end{align}
	We denote as \textit{composite algorithm} an algorithm that results from adding vector fields of the form in \eqref{Alg} to avail of the dynamics 
	traits associated to each $p_i$. This leads to
	\begin{align}
		\label{CompAlg}
		\dot{\hat{\theta}}(t)=-\left(\sum_{i=1}^h\lceil u^T(t)\hat{\theta}(t)-y(t)\rfloor^{p_i}\right)u(t).
	\end{align}	
	To analyse the convergence of the algorithms to $\theta_0$ the dynamics of the estimation error $x(t)=\hat{\theta}(t)-\theta_0$
	is needed. Hence it is necessary to compute the time derivative of $x(t)$. In the case of the \textit{single algorithm} in \eqref{Alg} this error 
	dynamics is as follow
	\begin{align}
		\label{ErrDyn00}
		\dot{x}(t)=-\lceil u^T(t)x(t)\rfloor^pu(t).
	\end{align}
	Considering $f(t,x)=-\lceil u^T(t)x\rfloor^pu(t)$ and replacing the argument $x$ by $\epsilon x$, with $\epsilon>0$, we have the following relation 
	$f(t,\epsilon x)=\epsilon^{p-1}\epsilon f(t,x)$. Taking this into consideration and following Definition 1 in \cite{Peut_Aeyels_99}, we can said that 
	the system \eqref{ErrDyn00} is homogeneous with homogeneity degree $p-1$. When $0<p<1$ the homogeneity degree is negative; for $p=1$ it is zero; and 
	for $p>1$ the homogeneity degree is positive. Now, the convergence analysis for this class of algorithms reduces to establish the stability and the 
	attractivity of the origin of a time-variant homogeneous system.\\
	 
	Repeating the same analysis for the composite algorithm the error dynamics is
	\begin{align}
		\label{ErrDyn01}
		\dot{x}(t)=-\left(\sum_{i=1}^h\lceil u^T(t)x(t)\rfloor^{p_i}\right)u(t).
	\end{align}
	Please, notice that this system is no longer homogeneous.\\
	
	A quick stability check with a quadratic Lyapunov function $V(x):=\frac{1}{2}x^Tx$ yields to global uniform stability (GUS) of the 
	origin of \eqref{ErrDyn00} and \eqref{ErrDyn01}. The time derivative of $V$ is presented for both cases in order to show its negative 
	semi-definitiveness:
	\begin{subequations}
	\begin{small}
		\begin{align}
			\label{dotVa}
			\text{along \eqref{ErrDyn00}}&		&\dot{V}(t)=-|u^T(t)x(t)|^{p+1}\leq0	,\\
			\label{dotVb}
			\text{along \eqref{ErrDyn01}}&		&\dot{V}(t)=-\sum_{i=1}^h|u^T(t)x(t)|^{p_i+1}\leq0.
		\end{align}
	\end{small}
	\end{subequations}
	Notice that the term $u^T(t)x(t)$ can vanish outside the set $\{t|u(t)=0\}\cup\{t|x(t)=0\}$. To assert the uniform asymptotic stability (UAS) of 
	the origin of \eqref{ErrDyn00} and \eqref{ErrDyn01} $u(t)$ needs to be PE. In \cite{Nar_Ann_87} a convenient description of the Persistent
	Excitation is given. This description is expressed as a lower bound of the integral of the regressor and is found convenient because it fits well in 
	the study of the estimation-error convergence. For convenience the definition is reproduced below
	\begin{mydef}
		\label{PE}
		Let $u(t):\mathbb{R}_+\to\mathbb{R}^n$ be a piecewise continuous function. It is said that $u$ is of PE if there exist $T>0$ and $\epsilon>0$ 
		such that
		\begin{align}
			\frac{1}{T}\int_{t}^{t+T}|u^T(s)w|\text{d}s\geq \epsilon,\nonumber
		\end{align}
		for all $w\in\mathbb{R}^n$ with $\|w\|=1$.
	\end{mydef}
	\hfill $\triangle$\\
	To conclude this section, we present some definitions of time-varying systems.
	\begin{mydef}
		Let a time-varying system be represented by $\dot{x}(t)=f(x(t),t)$ where $f(0,t)=0$ for all $t$. Let $\Omega$ be a connected open subset of 
		$\mathbb{R}^n$, such that $0\in\Omega$. The point $x=0$ is 
		\begin{itemize}
			\item	Uniformly finite-time stable (UFTS) if it is uniformly stable and for any $x_0\in\Omega$ there exist $0\leq \textbf{T}(x_0)<+\infty$ 
					such that $x(t,t_0,x_0)=0$ for all $t\geq t_0+\textbf{T}(x_0)$. Also, if $\Omega=\mathbb{R}^n$ then $x=0$ is said to be globally 
					uniformly finite-time stable (GUFTS).
			\item	Uniformly fixed-time stable (UFxTS) if it is GUFTS and exist $\bar{\textbf{T}}<+\infty$ such that 
					$\bar{\textbf{T}}\geq \textbf{T}(x_0)$ for every $x_0\in\mathbb{R}^n$.
		\end{itemize}
		\hfill $\triangle$
	\end{mydef} 
		
\section{Main Result}
	In this section the stability of \eqref{ErrDyn00} and \eqref{ErrDyn01} is presented. Two cases are recognized: \begin{inparaenum}[(i)] \item when only 
	one parameter needs to be estimated (scalar case) and \item when are more than one (vector case)\end{inparaenum}. This division is done because the 
	results in the vector case do not reflect certain phenomena that occurred in the scalar one.\\
	The proof of the theorems can be found in the Appendices.
	
	\subsection{Scalar Case}
		The results in the scalar case are stronger than in the vector one due to the fact that the product $u^T(t)x(t)$ can only be zero if one
		or both of the variables are zero. Persistent excitation prevents $u$ to stay in zero or to exhibit a growing dwelling time in zero. This implies
		that, for staying in zero, $x$ needs to be zero. When the exponent $p$ is chosen in the interval $[0,1)$ the algorithm converges in uniform 
		finite-time and a bound of the convergence time is given as an integer multiple of the persistent excitation period $T$. The next statement 
		summarizes this discussion.
		\begin{mytheo}
			\label{TheoScalar1}
			Let $u(t)$ be a piece-wise continuous function of $t$ and of PE. Let  $0\leq p<1$ and $n=1$, then the origin of \eqref{ErrDyn00} is globally 
			uniformly finite-time stable. An upper bound of the convergence time is
			\begin{align}
				\label{time0}
				\textbf{T}(x_0)\leq\left\lceil \frac{|x(t_0)|^{1-p}}{(1-p)T\epsilon^{p+1}}\right\rceil\cdot T
			\end{align}
			where $x(t_0)$ is the estimation error at the initial time; $T$ and $\epsilon$ are as in Definition \ref{PE} and $\lceil \cdot\rceil$ 
			denotes the ceiling function.
		\end{mytheo}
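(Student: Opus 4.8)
The plan is to exploit the scalar structure so that the estimation error keeps a fixed sign, and then to find a function of $|x|$ whose decay rate is \emph{independent} of $x$ itself, thereby turning the PE bound into a fixed per-window decrement.

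First I would specialize the dynamics to $n=1$, where $u^T(t)x(t)=u(t)x(t)$. Differentiating $V(x)=\tfrac12 x^2$ gives $\dot V=-|u(t)x(t)|^{p+1}\le 0$ as in \eqref{dotVa}, which already yields global uniform stability and shows that $|x(t)|$ is non-increasing. Inspecting the right-hand side of \eqref{ErrDyn00} shows that $\dot x$ has the sign of $-\,\mathrm{sign}(x)$ whenever $u(t)\neq 0$, so $x$ cannot cross the origin and $\mathrm{sign}(x(t))$ is invariant. Consequently, on any interval where $x\neq 0$, the magnitude evolves as $\tfrac{d}{dt}|x|=-|u(t)|^{p+1}|x|^{p}$.

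The key step is to introduce $W:=|x|^{1-p}$, which is well defined and monotone by the sign invariance. On intervals where $x\neq 0$ one computes $\dot W=(1-p)|x|^{-p}\tfrac{d}{dt}|x|=-(1-p)|u(t)|^{p+1}$, a quantity that no longer depends on $x$. I would then convert the PE condition into the estimate required for $W$. For $n=1$ the unit vectors are $w=\pm 1$, so Definition~\ref{PE} reads $\int_{t}^{t+T}|u(s)|\,\mathrm{d}s\ge T\epsilon$. Since $p+1\ge 1$, the map $s\mapsto s^{p+1}$ is convex, and Jensen's inequality gives $\tfrac1T\int_{t}^{t+T}|u(s)|^{p+1}\mathrm{d}s\ge\big(\tfrac1T\int_{t}^{t+T}|u(s)|\,\mathrm{d}s\big)^{p+1}\ge\epsilon^{p+1}$, that is, $\int_{t}^{t+T}|u(s)|^{p+1}\mathrm{d}s\ge T\epsilon^{p+1}$. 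Integrating $\dot W$ over one excitation window $[t_0+kT,\,t_0+(k+1)T]$ and inserting this bound shows that $W$ decreases by at least $(1-p)T\epsilon^{p+1}$ per window, a fixed positive amount independent of the current error. Hence, with $N:=\big\lceil |x(t_0)|^{1-p}/((1-p)T\epsilon^{p+1})\big\rceil$, the value $W$ cannot remain positive for $N$ full windows, so $x$ must reach the origin no later than $t_0+NT$, which is precisely \eqref{time0}; once $x=0$ it stays there because $\dot x=0$ at $x=0$, establishing GUFTS.

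The main obstacle I anticipate is the rigorous handling of the non-Lipschitz (and, for $p=0$, discontinuous) right-hand side. The separation-of-variables computation for $W$ is immediately valid only on intervals where $x\neq 0$, so I would argue carefully across the instant at which $x$ reaches the origin: either by treating the window in which the error first vanishes separately (convergence already achieved, consistent with the ceiling bound), or, for $p=0$, by reading solutions in the Filippov/differential-inclusion sense, where $\lceil\cdot\rfloor^{0}=\mathrm{sign}(\cdot)$ makes the decay estimate hold verbatim with $\epsilon^{p+1}=\epsilon$. The Jensen inequality is the clean bridge that converts the $L^{1}$-type PE bound on $u$ into the $L^{p+1}$ bound the decay of $W$ requires; the remainder is bookkeeping over the excitation windows.
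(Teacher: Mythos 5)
Your proof is correct and takes essentially the same route as the paper: your substitution $W=|x|^{1-p}$ is the paper's explicit solution of $\dot V=-2^{\frac{p+1}{2}}|u(t)|^{p+1}V^{\frac{p+1}{2}}$ in disguise (note $W=2^{\frac{1-p}{2}}V^{\frac{1-p}{2}}$), and your Jensen step is exactly the paper's Proposition \ref{PE_2}, proved there via H\"older's inequality. The window-counting bookkeeping and the resulting ceiling bound coincide with the paper's; your added care about the non-Lipschitz right-hand side and the window in which $x$ first vanishes is a welcome refinement rather than a different method.
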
		
		\hfill $\triangle$\\
		When $p=1$ the linear case is obtained and its properties are well know \cite{Anderson_77,Mor_Nar_77}, for this is left out of the 
		discussion. For $p>1$ only global uniform asymptotic stability (GUAS) can be asserted but other interesting property arise. No matter how large 
		the initial error is, the time $\tau(c)$ need to reach a smaller level set $V(x)=c$ is only function of $c$. This is referred to as 
		\textit{escape from infinite in finite time uniformly in} $t$. We gather these results in the following theorem.
		\begin{mytheo}
			\label{TheoScalar2}
			Let $u(t)$ be a piece-wise continuous function of $t$ and of PE, $p>1$ and $n=1$, then the origin of \eqref{ErrDyn00} is globally uniformly 
			asymptotically stable. Furthermore, the time needed to escape from infinity to a compact region $V(x)\leq c$ is bounded by
			\begin{align}
				\label{time2}
				\tau(c)\leq\left\lceil \frac{1}{2^{\frac{p-1}{2}}(p-1)T\epsilon^{p+1}}\cdot\frac{1}{c^{\frac{p-1}{2}}}\right\rceil\cdot T
			\end{align}
			\hfill $\triangle$
		\end{mytheo}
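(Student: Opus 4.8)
The plan is to exploit the scalar structure to turn the Lyapunov inequality into an autonomous scalar comparison equation, integrate it in closed form, and then lower bound the resulting time-integral of $u$ through the PE condition. First I would specialize $\dot V(t)=-|u(t)x(t)|^{p+1}$ from \eqref{dotVa} to $n=1$. The enabling observation is that, unlike the vector case, $u^T(t)x(t)$ is an honest product of two scalars, so $|u(t)x(t)|^{p+1}=|u(t)|^{p+1}|x(t)|^{p+1}$, and with $|x|=\sqrt{2V}$ this becomes the separable identity
\begin{align}
\dot V(t)=-2^{\frac{p+1}{2}}\,|u(t)|^{p+1}\,V(t)^{\frac{p+1}{2}}.\nonumber
\end{align}
This reduction to a single variable whose right-hand side factors cleanly through $V$ is precisely what fails for $n>1$, and is the reason the scalar results are stronger.

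Next I would integrate this identity. Since $p>1$ gives $\frac{p+1}{2}>1$, dividing by $V^{\frac{p+1}{2}}$ and integrating from $t_0$ to $t$ yields
\begin{align}
V(t)^{-\frac{p-1}{2}}=V(t_0)^{-\frac{p-1}{2}}+(p-1)\,2^{\frac{p-1}{2}}\int_{t_0}^{t}|u(s)|^{p+1}\,\mathrm{d}s.\nonumber
\end{align}
The right-hand side is strictly positive and non-decreasing, so $V$ stays positive for $x(t_0)\neq 0$ (hence no finite-time convergence, consistent with the positive homogeneity degree), while $\dot V\le 0$ already forces $V(t)\le V(t_0)$ and thus global existence.

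The remaining ingredient is a lower bound on $\int_{t_0}^{t}|u|^{p+1}$. Here I would invoke Jensen's inequality with the convex map $z\mapsto z^{p+1}$: on each window of length $T$,
\begin{align}
\frac{1}{T}\int_{t}^{t+T}|u(s)|^{p+1}\,\mathrm{d}s\ge\left(\frac{1}{T}\int_{t}^{t+T}|u(s)|\,\mathrm{d}s\right)^{p+1}\ge\epsilon^{p+1},\nonumber
\end{align}
the last step being the PE hypothesis of Definition \ref{PE}. Summing over $k$ consecutive windows gives $\int_{t_0}^{t_0+kT}|u|^{p+1}\ge kT\epsilon^{p+1}$, which grows without bound; the comparison identity then forces $V(t)^{-\frac{p-1}{2}}\to\infty$, i.e. $V(t)\to 0$. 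Because this estimate is uniform in $t_0$ and independent of $x(t_0)$, it upgrades the global uniform stability already granted by \eqref{dotVa} to GUAS. For the escape bound I would discard the non-negative head-start term $V(t_0)^{-\frac{p-1}{2}}$ (the genuine worst case $V(t_0)\to\infty$) and ask for the smallest number of windows $k$ driving $V$ below $c$, namely $(p-1)2^{\frac{p-1}{2}}kT\epsilon^{p+1}\ge c^{-\frac{p-1}{2}}$; solving for $k$ and setting $\tau(c)=kT$ reproduces \eqref{time2} exactly.

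The main obstacle is conceptual rather than computational: making the scalar factorization do its work while keeping every estimate uniform in $t_0$, and in particular bridging the mismatch between the hypothesis, which controls only $\int|u|$, and the dissipation rate, which lives at the $\int|u|^{p+1}$ level. The Jensen step is the crucial bridge; once it is in place, tracking the exponents so that the constants $2^{\frac{p-1}{2}}$, $(p-1)$ and $\epsilon^{p+1}$ land as written is routine, and no genuinely hard analysis remains.
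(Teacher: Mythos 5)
Your proposal is correct and follows essentially the same route as the paper: specialize $\dot{V}$ to the scalar case, integrate the separable equation in closed form to get $V(t)^{-\frac{p-1}{2}}=V(t_0)^{-\frac{p-1}{2}}+(p-1)2^{\frac{p-1}{2}}\int_{t_0}^{t}|u(s)|^{p+1}\mathrm{d}s$, lower-bound the integral over $k$ PE windows, and discard the initial-condition term (worst case $V(t_0)\to\infty$) to arrive at \eqref{time2}. The only cosmetic difference is that you obtain the window bound $\int_{t}^{t+T}|u(s)|^{p+1}\mathrm{d}s\geq T\epsilon^{p+1}$ directly from Jensen's inequality, whereas the paper isolates it as Proposition \ref{PE_2} and proves it via H\"older's inequality with $g(t)=1$ --- an equivalent argument.
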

		The study of the convergence of the composite algorithm can be done using the previous results. For the scalar case the stability can be asserted 
		via a Comparison Lemma for differential inequalities \cite{NonLin_Sys_Khalil}. In short, the trajectories of the system \eqref{ErrDyn01} are  
		below the trajectories corresponding to each of the single algorithms with every isolated exponent $p_i$. This can be seen from \eqref{dotVb} 
		which can be rewritten as
		\begin{align}
			\dot{V}(t)&=-\sum_{i=1}^h 2^{\frac{p_i+1}{2}}|u(t)|^{p_1+i}V^{\frac{p_i+1}{2}}(t)\nonumber\\
				&\leq -2^{\frac{p_i+1}{2}}|u(t)|^{p_1+i}V^{\frac{p_i+1}{2}}(t),\ i=1,2,\cdots,h.\nonumber
		\end{align}		 
		An important case occurs when at least one exponent is in $[0,1)$ and another one is greater than one. For this case the time needed 
		for the algorithm to converge to $\theta_0$ is independent of the initial estimation error and the initial time if $u(t)$ is of PE. This is
		summarized in the following theorem.
		\begin{mytheo}
			\label{TheoScalar3}
			Let $u(t)$ be a piece-wise continuous function of $t$ and of PE, also $n=1$. Consider the system \eqref{ErrDyn01} and let 
			$\textbf{P}:=\{p_1,p_2,\cdots,p_h\}$ be the set of the exponents. Denote $p_m$ the minimum element in \textbf{P} and $p_M$ the maximum. Assume 
			that $0\leq p_m<1$ and $p_M>1$. Let $\textbf{P}_m$ be the subset of the exponent smaller than one and $\textbf{P}_M$ the subset of the 
			exponents greater than one. Define $\bar{p}\in \textbf{P}_M$ as the exponent which maximizes $(p_i-1)\epsilon^{p_i+1}$ and 
			$\underline{p}\in\textbf{P}_m$ the exponent which maximizes $(1-p_i)\epsilon^{p_i+1}$, then an upper bound of the time needed to converge to 
			zero is
			\begin{align}
				\bar{\textbf{T}}\leq\left(\left\lceil \frac{1}{(1-\underline{p})T\epsilon^{\underline{p}+1}}\right\rceil
					+\left\lceil \frac{1}{2^{\frac{\bar{p}-1}{2}}(\bar{p}-1)T\epsilon^{\bar{p}+1}}\right\rceil\right)\cdot T.\nonumber
			\end{align}\hfill $\triangle$
		\end{mytheo}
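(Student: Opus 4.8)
The plan is to establish uniform fixed-time convergence by combining the two single-exponent results through the Comparison Lemma \cite{NonLin_Sys_Khalil}, letting the two distinguished exponents $\underline{p}\in\textbf{P}_m$ and $\bar{p}\in\textbf{P}_M$ dominate the dynamics near and far from the origin, respectively. Starting from the quadratic $V(x)=\frac{1}{2}x^2$, I would rewrite \eqref{dotVb} (using $|x|=\sqrt{2V}$) as
\[
	\dot{V}(t)=-\sum_{i=1}^h 2^{\frac{p_i+1}{2}}|u(t)|^{p_i+1}V^{\frac{p_i+1}{2}}(t),
\]
and observe that, since every summand is nonnegative, discarding all but one index gives $\dot{V}(t)\leq-2^{\frac{p_i+1}{2}}|u(t)|^{p_i+1}V^{\frac{p_i+1}{2}}(t)$ for each $i$. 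Hence along any trajectory of the composite system \eqref{ErrDyn01} the map $V$ decreases at least as fast as it would along the single algorithm \eqref{ErrDyn00} carrying the isolated exponent $p_i$; by the Comparison Lemma the composite trajectory stays below each of these single-exponent trajectories on the relevant region.

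Next I would split the trajectory at the unit level set. For $V\geq 1$ the factor $V^{\frac{p+1}{2}}$ is increasing in $p$, so the super-unity exponent $\bar{p}>1$ furnishes the sharpest decay; I would therefore keep only the $\bar{p}$ term and invoke Theorem \ref{TheoScalar2}. Its escape-from-infinity estimate \eqref{time2}, evaluated at the level $c=1$, shows that from an arbitrarily large initial error the composite trajectory enters the region $V\leq 1$ within the uniform time $\big\lceil \frac{1}{2^{(\bar{p}-1)/2}(\bar{p}-1)T\epsilon^{\bar{p}+1}}\big\rceil\cdot T$, independent of $x(t_0)$ and $t_0$; this is the second ceiling term. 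The choice of $\bar{p}$ as the maximiser of $(p_i-1)\epsilon^{p_i+1}$ over $\textbf{P}_M$ renders this contribution the smallest available among the exponents exceeding one.

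Once inside the unit level I would switch to the low exponent: for $V\leq 1$ the factor $V^{\frac{p+1}{2}}$ is decreasing in $p$, so now $\underline{p}<1$ dominates, and retaining only the $\underline{p}$ term lets me apply Theorem \ref{TheoScalar1}. Its finite-time estimate \eqref{time0}, evaluated with $|x(t_0)|=1$, gives convergence to the origin within $\big\lceil \frac{1}{(1-\underline{p})T\epsilon^{\underline{p}+1}}\big\rceil\cdot T$, again independent of the initial data; this is the first ceiling term, and $\underline{p}$ is selected to minimise it by maximising $(1-p_i)\epsilon^{p_i+1}$ over $\textbf{P}_m$. Summing the two phase times yields the uniform bound $\bar{\textbf{T}}$, whose independence of $x(t_0)$ and $t_0$ certifies uniform fixed-time stability.

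The main obstacle is the rigorous hand-over between the two phases. Because the PE condition of Definition \ref{PE} controls the dwell of $u$ only in an averaged sense over windows of length $T$ --- which is precisely why Theorems \ref{TheoScalar1} and \ref{TheoScalar2} yield integer multiples of $T$ --- I must verify that the window accounting is not corrupted at the instant $V$ crosses the unit level, and that the time bounds of the two theorems, each proved for its own single algorithm, transfer verbatim to the composite trajectory through the comparison inequality on the respective region. Equal care is needed in tracking the powers of $2$ arising from $|x|=\sqrt{2V}$ when passing between the $V$-level used in \eqref{time2} and the $|x|$-level used in \eqref{time0}, since the geometric slack between $V\leq 1$ and $|x|\leq 1$ is exactly where the constants in the final bound must be reconciled.
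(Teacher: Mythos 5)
Your proposal is correct and follows essentially the same route as the paper's own proof: the Comparison Lemma applied termwise to \eqref{dotVb}, a two-phase argument with handover at the level set $V=1$, Theorem \ref{TheoScalar2} (with $c=1$) bounding the escape phase via $\bar{p}$, Theorem \ref{TheoScalar1} bounding the terminal phase via $\underline{p}$, and the fixed-time bound obtained by summing the two estimates. The handover discrepancy you flag at the end (the level $V=1$ corresponds to $|x|=\sqrt{2}$, while the first ceiling term in the claimed bound corresponds to evaluating \eqref{time0} at $|x(t_0)|=1$) is present, and likewise unresolved, in the paper's own proof, so your attempt matches it in both substance and in this minor imprecision of the constants.
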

		Last but no least, a discontinuous algorithm capable of estimating one varying parameter is presented. This algorithm makes use of the 
		regressor sign and the sign of the output estimation error
		\begin{align}
			\label{DiscAlg}
			\dot{\hat{\theta}}(t)&=-L\cdot\text{sign}\big(u(t)\hat{\theta}(t)-y(t)\big)\cdot\text{sign}\big(u(t)\big),
		\end{align}
		with $L>0$. Now assume that the parameter variation is bounded, i.e. $\dot{\theta}\in[-\gamma,\gamma]$, $\gamma\geq 0$. Also it is considered that 
		$u(t)$ is of PE and cannot stay in zero for time intervals, but can cross it. The error dynamics induced by \eqref{DiscAlg} is
		\begin{align}
			\label{DiscErrD}
			\dot{x}(t)\in-L\cdot\text{sign}\big(x(t)\big)+\dot{\theta}(t).
		\end{align}
		which is a differential inclusion \cite{LiapFunc_Bacciotti_Rosier}. By employing $V(x)=\frac{1}{2}|x|^2$ as Lyapunov Function its derivative along 
		the trajectories of \eqref{DiscErrD} is
		\begin{align}
			\dot{V}(t)&=-L|x(t)|-\dot{\theta}x(t)\nonumber\\
				&=-\big(L+\dot{\theta}\cdot\text{sign}(x(t))\big)|x(t)|,\nonumber
		\end{align}
		which is negative if $L>\gamma$ and $u(t)\neq 0$. If $u(t)$ stay in zero the track of $\theta(t)$ is lost but can be recovered later if $u$ is of 
		PE. To guarantee exact tracking, $u(t)$ cannot stay in zero.
		
	\subsection{Vector Case}
		When $\theta_0$ is a vector the set where $u^T(t)x(t)$ is zero grows. This changes the general behaviour of our algorithms. Only
		GUAS can be guaranteed in general with persistent excitation. Also, the discontinuous algorithm which result of selecting $p=0$ does not 
		converge. In the latter case, a signal of PE can be constructed for which the output estimation error $e(t)$ becomes zero in finite time but 
		$x(t)$ does not reach the origin. The stability properties of the origin of \eqref{ErrDyn00} are summarized in the following theorem.
		\begin{mytheo}
		\label{TVecS}
			Let $u(t)$ be a piece-wise continuous function of $t$ and of PE, uniformly bounded by $u_M$, then the origin of \eqref{ErrDyn00} is globally 
			uniformly asymptotically stable for any $p>0$.
		\end{mytheo}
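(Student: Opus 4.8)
The plan is to upgrade the global uniform stability already granted by \eqref{dotVa} to global uniform \emph{asymptotic} stability by showing that the Lyapunov function $V(x)=\tfrac12 x^Tx$ strictly decreases, by a controllable amount, over every excitation window $[t,t+T]$. Since $\dot V(t)=-|u^T(t)x(t)|^{p+1}\le 0$, the map $s\mapsto V(x(s))$ is non-increasing; in particular $\|x(s)\|\le\|x(t)\|=:r$ for all $s\ge t$, and if a trajectory ever reaches the origin it remains there (so non-uniqueness of solutions for $0<p<1$ is harmless). Thus it suffices to analyse, for $x(t)\ne 0$, the exact decrement
\begin{align}
  I:=V(x(t))-V(x(t+T))=\int_t^{t+T}|u^T(s)x(s)|^{p+1}\,\mathrm ds\ge 0, \nonumber
\end{align}
and to bound it from below in terms of $r$ using the PE inequality of Definition \ref{PE}.

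The difficulty is that $\dot V$ vanishes on the whole set $\{s:u^T(s)x(s)=0\}$, which in the vector case is large, so no LaSalle-type argument applies to a time-varying system; moreover the PE bound is stated for the \emph{frozen} direction $w=x(t)/r$, whereas the integrand involves the moving state $x(s)$. The core step is therefore to control the within-window displacement $\|x(s)-x(t)\|$ by the dissipation itself. Using $\|\dot x\|=|u^Tx|^p\|u\|\le u_M|u^Tx|^p$ together with H\"older's inequality with exponents $\tfrac{p+1}{p}$ and $p+1$ gives, for every $s\in[t,t+T]$,
\begin{align}
  \|x(s)-x(t)\|\le u_M\int_t^{t+T}|u^Tx|^p\,\mathrm d\tau\le u_M\,T^{\frac1{p+1}}\,I^{\frac{p}{p+1}}. \nonumber
\end{align}
Here the uniform bound $u_M$ is essential, which is exactly why it is assumed in the statement, unlike the scalar theorems.

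With this displacement estimate I would transfer the PE bound from $x(t)$ to $x(s)$. Writing $|u^T(s)x(t)|\le|u^T(s)x(s)|+u_M\|x(s)-x(t)\|$ and integrating, the PE lower bound $\int_t^{t+T}|u^T(s)x(t)|\,\mathrm ds=r\int_t^{t+T}|u^T(s)w|\,\mathrm ds\ge rT\epsilon$ combined with two more applications of H\"older yields a single scalar inequality of the form
\begin{align}
  rT\epsilon\le T^{\frac{p}{p+1}}\,I^{\frac1{p+1}}+u_M^{2}\,T^{1+\frac1{p+1}}\,I^{\frac{p}{p+1}}. \nonumber
\end{align}
Since at least one of the two terms on the right must exceed $\tfrac12 rT\epsilon$, this forces
\begin{align}
  I\ge\min\big\{C_1\,r^{p+1},\,C_2\,r^{\frac{p+1}{p}}\big\}, \nonumber
\end{align}
with constants $C_1,C_2>0$ depending only on $\epsilon,u_M,T,p$.

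Finally I would sample along the window endpoints, $V_k:=V(x(t_0+kT))$, and rewrite the last bound, via $r=\sqrt{2V_k}$, as a difference inequality $V_{k+1}\le V_k-\varphi(V_k)$ where $\varphi(V)=\min\big\{C_1(2V)^{\frac{p+1}{2}},C_2(2V)^{\frac{p+1}{2p}}\big\}$ is continuous, positive for $V>0$, and vanishes at the origin. A standard comparison argument for such scalar difference inequalities then shows $V_k\downarrow 0$: a positive limit would make $V_k-V_{k+1}\ge\varphi(V_k)$ bounded away from zero while its left-hand side tends to zero, a contradiction. Because $\varphi$ and the whole estimate depend only on $V_k$ and on $\epsilon,u_M,T,p$ --- never on $t_0$ --- the convergence is uniform in the initial time and holds for every initial condition, which together with the uniform stability from \eqref{dotVa} gives GUAS for all $p>0$. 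I expect the main obstacle to be the self-consistent displacement bound: closing the loop so that the motion of $x$ inside the window is dominated by the same integral $I$ that the PE condition is meant to lower-bound.
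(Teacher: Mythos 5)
Your proposal is correct, and its engine coincides with the paper's own proof: the paper also freezes the state at the start of the excitation window, writes $|u^T(t)x(t)|\ge|u^T(t)x(t_1)|-|u^T(t)(x(t_1)-x(t))|$, and controls the within-window drift self-consistently through $\|\dot x\|\le u_M|u^Tx|^p$ followed by H\"older with conjugates $\tfrac{p+1}{p}$ and $p+1$ --- exactly your displacement bound (compare \eqref{IneIndx} and the inequality that follows it). The packaging differs in three places. First, the paper applies Jensen at power $p+1$ before integrating (producing the $2^{-p}$ factor) and invokes Proposition \ref{PE_2}, whereas you integrate at first power and use Definition \ref{PE} directly; this is equivalent bookkeeping. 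Second, the paper leaves the self-consistent inequality implicit, in the form $P(\mathbf{z})\ge\tfrac{T\epsilon^{p+1}}{2^p}\|x(t_1)\|^{p+1}$ with $P(\mathbf{z})=\mathbf{z}+T^2u_M^{2(p+1)}\mathbf{z}^p$, and argues through monotonicity of $P^{-1}$; your case split (``one of the two terms exceeds half the PE bound'') solves it explicitly and yields the usable decrement $\min\{C_1 r^{p+1},\,C_2 r^{(p+1)/p}\}$. Third, and most substantively, the paper finishes by citing Theorem 5 of \cite{Nar_Ann_87}, while you close the loop yourself with the sampled difference inequality $V_{k+1}\le V_k-\varphi(V_k)$: this makes your proof self-contained and with explicit constants, at the cost of some length. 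One step you should tighten: the contradiction argument (``a positive limit is impossible'') only gives per-trajectory convergence; for the \emph{uniform} attractivity that GUAS requires, use that $\varphi$ is increasing and count steps --- while $V_k\ge\eta$ you have $V_{k+1}\le V_k-\varphi(\eta)$, so at most $\lceil (V_0-\eta)/\varphi(\eta)\rceil$ windows are needed, a bound depending only on $(V_0,\eta)$ and never on $t_0$, which is exactly the uniformity claim you assert.
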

		\hfill $\triangle$\\
		Although only GUAS can be asserted with PE in general, in the following section two classes of signals of PE are presented. One of them 
		guarantees global uniform exponential stability (GUES) and the other GUFTS; in both cases for $0<p<1$. This means that the signals $u(t)$ which 
		can provide UFTS are in a subset of those of PE.\\
		The composite algorithm still works in the vector case but nothing more than GUAS can be claimed. In contrast to the scalar case the stability of 
		the composite vector algorithm cannot be obtained via the Comparison Lemma. The following theorem synthesizes this discussion
		\begin{mytheo}
			\label{TVecC}
			 Let $u(t)$ be a piece-wise continuous function of $t$ and of PE, uniformly bounded by $u_M$, then the origin of \eqref{ErrDyn01} is globally
		 	uniformly asymptotically stable if $p_M\leq p_m+1$	where $p_m$ is the minimum exponent in the set $\{p_1,p_2,\cdots,p_h\}$ and $p_M$ is the 
		 	maximum.
		\end{mytheo}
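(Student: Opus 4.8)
The plan is to upgrade the global uniform stability already furnished by \eqref{dotVb} to global uniform asymptotic stability by establishing a strict decrease of $V(x)=\tfrac12 x^Tx$ over each excitation window. Since $\dot V\le 0$ along \eqref{ErrDyn01}, the trajectory is bounded and $V$ is nonincreasing, so in particular $|x(s)|\le|x(t)|$ for all $s\ge t$; this monotonicity is what keeps every subsequent estimate uniform in $t_0$. It therefore suffices to produce a function $\eta(\cdot)>0$, independent of $t$, with $V(t+T)-V(t)\le-\eta\big(V(t)\big)$ over any window $[t,t+T]$ of length $T$ from Definition \ref{PE}; iterating such an inequality drives $V\to 0$ and, combined with uniform stability, yields GUAS.

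The core estimate integrates \eqref{dotVb} over $[t,t+T]$ and lower-bounds each term $\int_{t}^{t+T}|u^T(s)x(s)|^{p_i+1}\,\mathrm ds$. First I would \emph{freeze} the state, replacing $x(s)$ by $x(t)$, and apply the PE condition of Definition \ref{PE} to the unit vector $x(t)/|x(t)|$ together with Jensen's inequality for the convex map $v\mapsto v^{p_i+1}$; this gives a clean lower bound of order $T\,\epsilon^{p_i+1}|x(t)|^{p_i+1}$ for the frozen integral. The error incurred by freezing is controlled by the displacement $\sup_{s\in[t,t+T]}|x(s)-x(t)|$, which I would bound directly from \eqref{ErrDyn01} using $\|u\|\le u_M$ and the monotonicity $|x(s)|\le|x(t)|$, obtaining a displacement of order $T\sum_i u_M^{p_i+1}|x(t)|^{p_i}$.

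The hard part is that the vector field is non-Lipschitz and $u^T x$ may vanish on a large set, so freezing the state is not automatically legitimate: the displacement is governed by the fastest, largest-degree component (the $p_M$ term, of size $\sim T u_M^{p_M+1}|x(t)|^{p_M}$), whereas the guaranteed decrease is governed by the slowest, smallest-degree component (the $p_m$ term, of size $\sim T\epsilon^{p_m+1}|x(t)|^{p_m+1}$). The displacement-to-magnitude ratio scales like $|x(t)|^{p_M-1}$, so for the frozen PE bound to dominate the freezing error \emph{uniformly in the scale} $|x(t)|$ one must bound the spread of homogeneity degrees $p_i-1$; this is exactly the hypothesis $p_M\le p_m+1$. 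In the complementary regime where the displacement is not small relative to $|x(t)|$ (which is unavoidable near the origin when $p_m<1$), I would close the argument by a dichotomy: a large displacement means $\int_{t}^{t+T}|u^T(s)x(s)|^{p_i}\,\mathrm ds$ is large, and a Hölder inequality then forces $\int_{t}^{t+T}|u^T(s)x(s)|^{p_i+1}\,\mathrm ds=-\Delta V$ to be large as well. Thus either branch produces a strict decrease depending only on $V(t)$.

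Combining the two branches yields $V(t+T)\le V(t)-\eta\big(V(t)\big)$ with $\eta>0$ and independent of $t$. Because the per-window decrement depends only on the current value of $V$, the number of windows needed to pass from any level $V(t_0)$ to any smaller level is bounded uniformly in $t_0$, which is precisely uniform attractivity; together with the global uniform stability from \eqref{dotVb} this gives GUAS. I expect the only genuinely delicate point to be the balancing of displacement against decrease across all scales, and it is there that $p_M\le p_m+1$ is used; the single-exponent result of Theorem \ref{TVecS} can serve as a template for the frozen-state/PE estimate, the composite case differing only in that the displacement now aggregates contributions from all exponents.
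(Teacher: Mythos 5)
Your proposal is correct and follows essentially the same route as the paper's proof: the frozen-state estimate (triangle inequality, Jensen, and the PE bound of Proposition \ref{PE_2}), the displacement bound via $\int\|\dot{x}\|\,\text{d}s$, and the Hölder step whose conjugacy requirement $p_j\le p_i+1$ for all pairs $(i,j)$ is exactly where the hypothesis $p_M\le p_m+1$ enters, all occur in the same order in the paper. Your small/large-displacement dichotomy is precisely the paper's self-bounding inequality resolved through the inverse of the strictly increasing polynomial $P_i$, and your direct iteration of the per-window decrease is what the paper delegates to Theorem 5 of \cite{Nar_Ann_87}.
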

	 	\hfill $\triangle$\\
		The extra condition regarding the exponents appears due to the way the proof was done and we think it is not intrinsic to the stability.\\
		In the scalar case the Comparison Lemma gives information about the relationship between the trajectories of the single algorithm but in this 
		case nothing can be concluded. In the next section simulation examples are presented for the composite algorithm. In the Figure \ref{Fig04} and
		Figure \ref{Fig05} it can be seen that the Lyapunov function is below of the corresponding one for the single algorithms but this does not need 
		to be true in general.
	
\section{Examples}	
	In this section examples are presented. In the scalar case, simulations of a parameter estimation process are shown, whereas for the vector case also 
	the behaviour of the error is studied for a specific class of signals.	
	\subsection{Scalar case}
		Numerical simulation is performed to illustrate the difference between the classic gradient algorithm and the family presented in this work. 
		For this aim we choose:
		\begin{align}
			\begin{matrix}
				\theta_0=5,	& \hat{\theta}(t_0)=0,	&	u(t)=2\cos(2t).
			\end{matrix}\nonumber
		\end{align}
		Four algorithms were simulated: one with $p_1=\frac{3}{4}$, other with $p_2=1$, a third one with $p_3=\frac{3}{2}$ and a composite one with 
		$p_1$ and $p_3$. Figure \ref{Fig01} and Figure \ref{Fig02} show the behaviour of the Lyapunov function 
		$V(t)=\frac{1}{2}|\hat{\theta}(t)-\theta_0|^2$. In the first figure the initial value of $V(t)$ and its decaying behaviour are presented for all 
		the algorithms. As can be seen there is an ordering in the decay: $\text{Composite}>p=\frac{3}{2}>p=1>p=\frac{3}{4}$, and this is due to the 
		initial value of $V(t_0)>>1$. When the estimation error becomes smaller the order changes as in Figure \ref{Fig02}.\\
		\begin{figure}[h]
			\centering
				\includegraphics[width=2.5in]{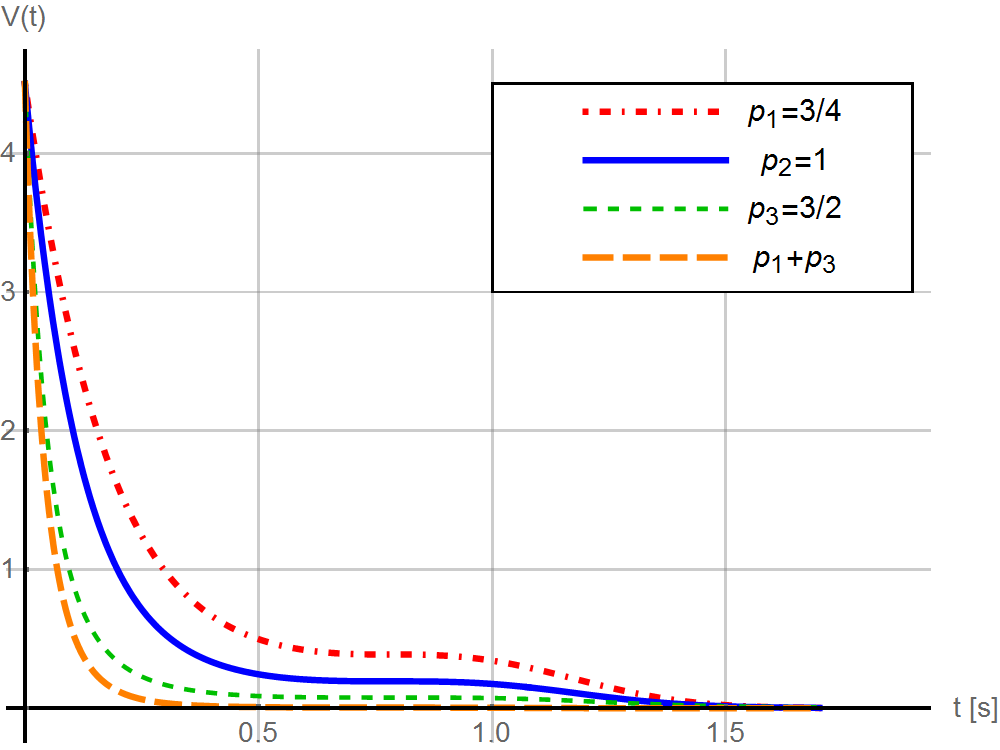}
				\caption{Behaviour of $V(t)$ in the Scalar case (a).}
				\label{Fig01}
		\end{figure}
		In Figure \ref{Fig02} the behaviour is shown when $V(t)<0.014$. In this region the decay order is: 
		$\text{Composite}>p=\frac{3}{4}>p=1>p=\frac{3}{2}$ as is expected from the kind of convergence of every algorithm: 
		$\text{u. fixed-time}>\text{u. finite-time}>\text{u. exponential}>\text{u. asymptotic}$. This order does not change for any future time.\\
		\begin{figure}[h]
			\centering
				\includegraphics[width=2.5in]{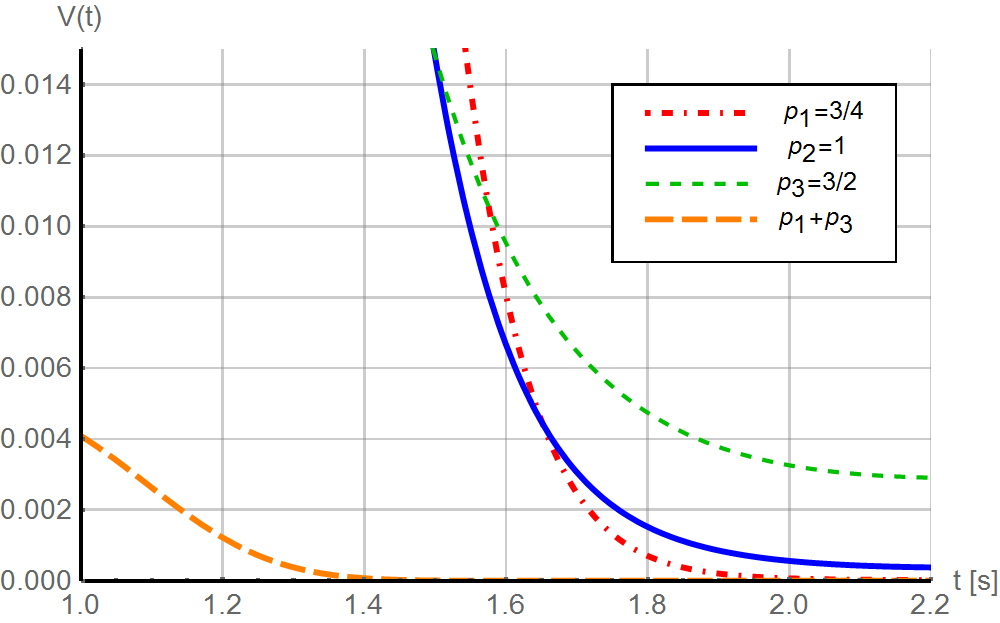}
				\caption{Behaviour of $V(t)$ in the Scalar case (b).}
				\label{Fig02}
		\end{figure}
		Now a estimation of a time-varying parameter is done via \eqref{DiscAlg}. The simulation parameters are:
		\begin{small}		
			\begin{align}
				\begin{matrix}
					\theta(t)=\cos(3t)-4,	&	\hat{\theta}(t_0)=0,	&	u(t)=\cos(t)+1.5,\\
											&	L=3.3.					&
				\end{matrix}\nonumber
	 		\end{align}
		\end{small}
		Since the regressor is non-zero for any time, an exact tracking of the parameter is achieved as shown in Figure \ref{Fig03}. 
		\begin{figure}[h]
			\centering
				\includegraphics[width=2.5in]{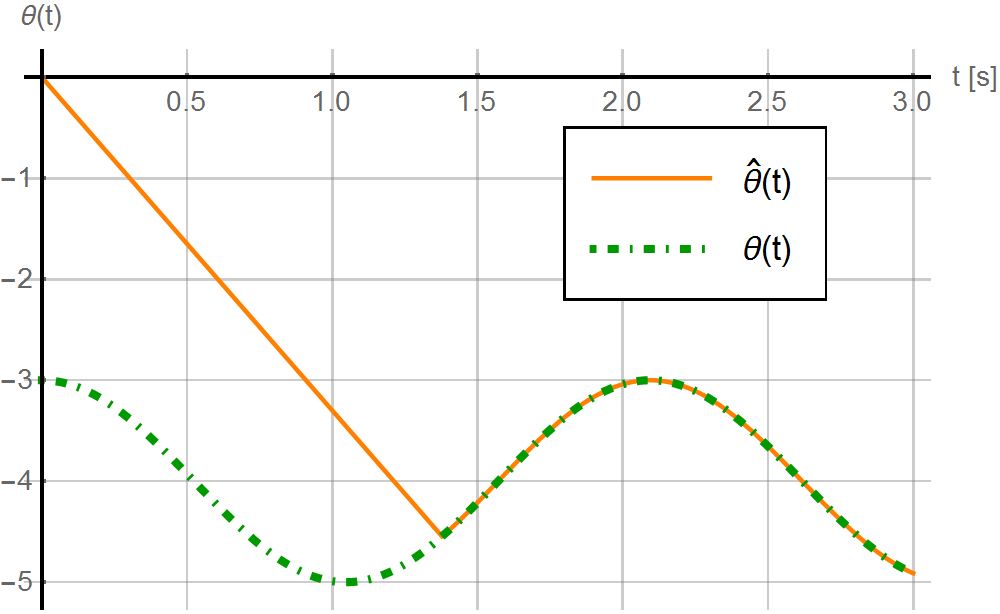}
				\caption{Time-varying parameter estimation process.}
				\label{Fig03}
		\end{figure}
		
	\subsection{Vector case}
		First simulation results are presented for the estimation process of three parameters with \eqref{Alg} and \eqref{CompAlg}. Four algorithms were
		simulated: the first with $p_1=3/4$, the second with $p_2=1$, the third one with $p_3=3/2$ and the last one with $p_1$ and $p_3$. The simulation 
		results are shown in Figure \ref{Fig04} and Figure \ref{Fig05} to compare the behaviour of the algorithms, when different exponents are chosen. 
		Later, the response of the algorithm for $0<p<1$ is found for piecewise constant signal and the class of convergence is established for it.\\
		For the simulation example the next conditions were selected:
		\begin{small}
			\begin{align}
				u(t)&=\begin{bmatrix}2\cos(2t)	&	-\cos(3t)	&	5\cos(5t)\end{bmatrix}^T,\nonumber\\
				\theta_0&=\begin{bmatrix}-3	&	\sqrt{2}	&	4\end{bmatrix}^T,\nonumber\\
				\hat{\theta}(t_0)&=\begin{bmatrix}0	&	0	&	0\end{bmatrix}^T.\nonumber
			\end{align}
		\end{small}
		As in the last section only the plot of $V(t)=\frac{1}{2}(\hat{\theta}(t)-\theta_0)^T(\hat{\theta}(t)-\theta_0)$ is presented. In Figure
		\ref{Fig04} the decay of $V(t)$ is shown at the beginning of the process and the decay order is preserved. Also in Figure \ref{Fig05} the
		change of order that was found for the scalar case is obtained. However we do not see, in general, a change of order for arbitrary parameter 
		values.\\
		\begin{figure}[h]
			\centering
				\includegraphics[width=2.5in]{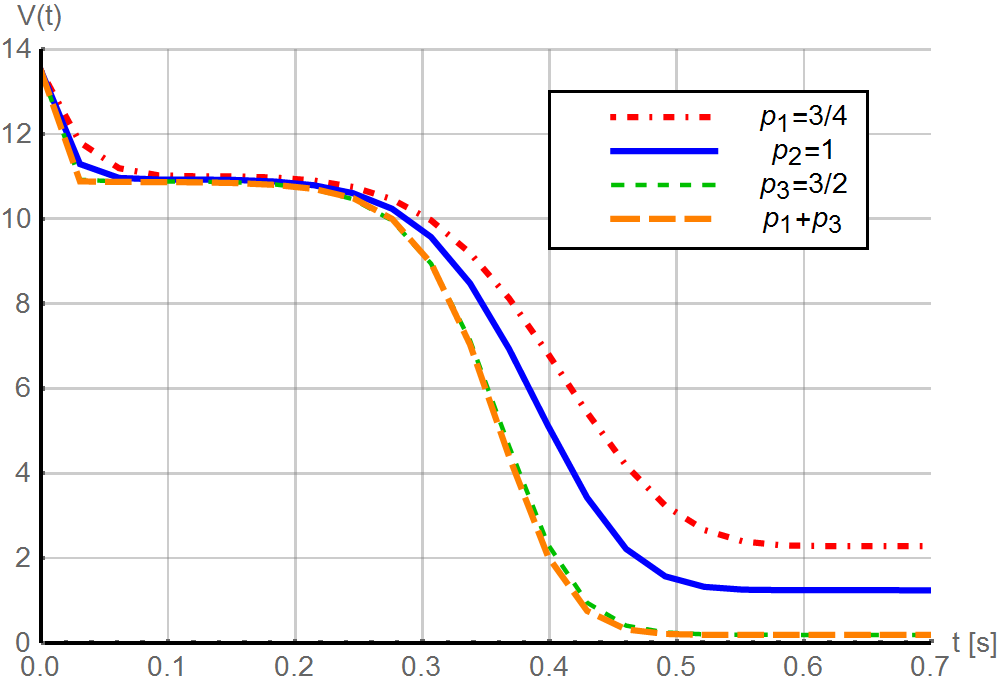}
				\caption{Simulation results for vector case (a).}
				\label{Fig04}
		\end{figure}
		\begin{figure}[h]
			\centering
				\includegraphics[width=2.5in]{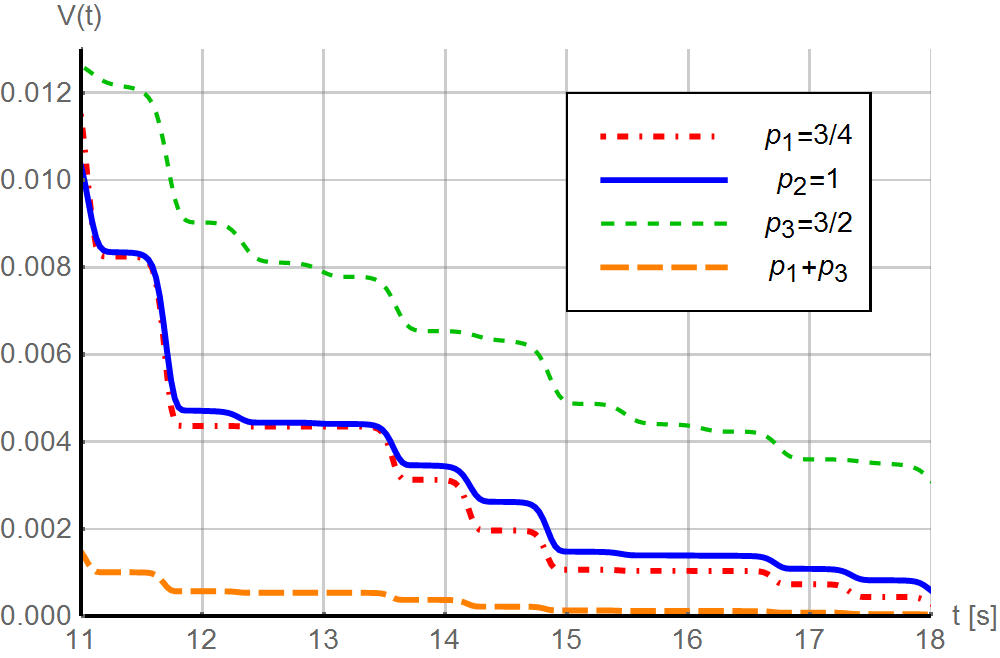}
				\caption{Simulation results vector case (b).}
				\label{Fig05}
		\end{figure}
		Now the response of the system \eqref{ErrDyn00} when $u(t)$ is piecewise constant is analyzed. Let us define a new variable as follows 
		$z(t):=u^T(t)x(t)$, its dynamics is $\dot{z}(t)=u^T(t)\dot{x}(t)=-\|u(t)\|^2\lceil z(t)\rfloor^p$ since $\dot{u}(t)=0$ almost everywhere, where 
		$\|\cdot\|$ denotes the Euclidean norm. For simplicity we assume that the length of the intervals where $u$ remains constant has a constant 
		value $\tau>0$. The solution of $z(t)$ in the interval $t\in[t_1,t_1+\tau]$ with $u(t)=\mu$, $u_M\geq\|\mu\|>0$ and $p>0$, $p\neq 1$ is
		\begin{small}
			\begin{align}
				z(t)=\left(|z(t_1)|^{1-p}-(1-p)\|\mu\|^2(t-t_1)\right)^{\frac{1}{1-p}}\text{sign}(z(t_1)).\nonumber
			\end{align}
		\end{small}
		For $0<p<1$ the expression above holds if $|z(t_1)|^{1-p}\geq (1-p)\|u\|^2(t-t_1)$ and $z(t)=0$ otherwise. The solution of $z(t)$ can be used to 
		find the solution of $x(t)$ by noticing that $\dot{x}(t)=-\lceil z(t)\rfloor^pu$. This is given in the forthcoming expression
		\begin{small}
			\begin{align}
				x(t)=x(t_1)-&\frac{1}{\|\mu\|^2}\mu\Big(\mu^Tx(t_1)-\text{sign}(\mu^Tx(t_1))\times\nonumber\\
					\times&\big(|\mu^Tx(t_1)|^{1-p}-(1-p)\|\mu\|^2(t-t_1)\big)^{\frac{1}{1-p}}\Big).\nonumber
			\end{align}
		\end{small}
		Restricting the analysis for $0<p<1$ it is clear that, if $\tau$ is large enough then $x(t_1+\tau)$ is orthogonal to $\mu(t_1)$
		\begin{footnotesize}
			\begin{align}
				x(t_1+\tau)&=x(t_1)-\frac{1}{\|\mu\|^2}\mu\mu^Tx(t_1)=\Big(\mathbb{I}_n-\frac{1}{\|\mu\|^2}\mu\mu^T\Big)x(t_1).\nonumber
			\end{align}
		\end{footnotesize}
		Fix $\tau$, if $\|x(t_1)\|^{1-p}\leq (1-p)u_m^{p+1}~\tau$ then $x(t_1+\tau)$ becomes orthogonal to $\mu$. This means that there exists a ball 
		centered in zero for which $\tau$ is always large enough to make $x$ orthogonal to any $\mu$. If the sequence $U=\{\mu_i\}_{i=1}^{\infty}$ is 
		chosen to fulfil the notion of persistent excitation for discrete systems in \cite{Lee_Narendra_88}, then the origin is GUAS by Theorem 
		\ref{TVecS} and $x(t)$ can reach any ball centered in zero in finite time, i.e. always reach the ball in where $\tau$ guarantees that $x$ becomes 
		orthogonal to $\mu_i$, and this yields a discrete system which is described by the following difference equation
		\begin{align}
			x_{k+1}&=\Big(\mathbb{I}_n-\frac{1}{\|\mu_k\|^2}\mu_k\mu_k^T\Big)x_k.\nonumber
		\end{align}
		Only assuming PE of the sequence $U$, GUES of the origin can be concluded making an analogue analysis to the one shown in the proof for Theorem 4
		in Appendix \ref{Proof1}  but taking $V_k(x)=x_k^Tx_k$, $\Delta V=V_{k+T}-V_k(x)$ instead of $V(x)$ and $\dot{V}(t)$ respectively. Now take $n$ 
		mutual orthogonal vectors $\{v_1,v_2,\cdots,v_n\}$, $v^T_iv_j=0$, $i\neq j$, and excite the system with them, then $x_{k+n}=0$
		\begin{footnotesize}
			\begin{align}
				x_{k+n}&=\prod_{i=1}^n\left(\mathbb{I}_n-\frac{1}{\|v_i\|^2}v_iv_i^T\right)\cdot x_k\nonumber\\
					&=\left(\mathbb{I}_n-\sum_{i=1}^n\frac{1}{\|v_i\|^2}v_iv_i^T\right)x_k=0.\nonumber
			\end{align}
		\end{footnotesize}
		A sequence constructed with these vectors is of PE and also makes the system GUFTS. This shows that persistent excitation cannot guarantee 
		finite-time convergence in the vector case but it does not forbid it.
\section{Conclusions}
	In this work a parameter estimation technique is presented. With the proposed algorithms we obtained finite-time and fixed-time 
	convergence to the true parameters. However this properties cannot be guaranteed in general. A deep study of the signals that can assert such 
	important properties is still needed.\\
	Even though the algorithms were selected to make the error dynamics homogeneous in the state, this does not help in the analysis. However, the 
	homogeneous non-linearities can enhance the robustness properties of our algorithms w.r.t additive perturbations in comparison with classical
	approaches.

\section*{Acknowledgement}
	The authors thank the financial support from PAPIIT-UNAM (Programa de Apoyo a Proyectos de Investigación e Innovación Tecnológica), project IN113614; 
	Fondo de Colaboración II-FI UNAM, Project IISGBAS-122-2014; CONACyT (Consejo Nacional de Ciencia y Tecnología), project 241171; and CONACyT CVU: 
	491701.

\bibliographystyle{IEEEtran}
\bibliography{IEEEabrv,myBib}

\appendices
\section{Persistent Excitation}
	To prove uniform asymptotic stability rather than uniform stability the persistent excitation in the regressor is needed. For using the property
	adequately the next proposition is developed.
	\begin{myprop}
		\label{PE_2}
		If $u(t)$ is of PE, then the following inequality holds for $T$, $\epsilon$ and $w$ as in Definition \ref{PE} and $p\geq 0$
		\begin{align}
			\int_{t}^{t+T}|u^T(s)w|^{p+1}\text{d}s\geq T\epsilon^{p+1}.\nonumber
		\end{align}
	\end{myprop}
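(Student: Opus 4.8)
The plan is to deduce the higher-power estimate from the $p=0$ bound furnished by Definition \ref{PE} through a single convexity argument. Fix a unit vector $w$ and a time $t$, and regard $\frac{1}{T}\,\mathrm{d}s$ as a probability measure on the interval $[t,t+T]$. The integrand of interest is the nonnegative function $s\mapsto |u^T(s)w|$, which is integrable on $[t,t+T]$ because $u$ is piecewise continuous and the interval is bounded.

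First I would invoke the convexity of the map $\phi(r)=r^{p+1}$ on $[0,\infty)$, which holds precisely because $p+1\geq 1$ for every $p\geq 0$ (for $p=0$ the map is affine and the inequality below degenerates to an equality, which is harmless). Jensen's inequality applied to $\phi$ against the probability measure $\frac{1}{T}\,\mathrm{d}s$ yields
\begin{align}
	\left(\frac{1}{T}\int_t^{t+T}|u^T(s)w|\,\mathrm{d}s\right)^{p+1}\leq \frac{1}{T}\int_t^{t+T}|u^T(s)w|^{p+1}\,\mathrm{d}s.\nonumber
\end{align}
Next I would substitute the persistence-of-excitation bound $\frac{1}{T}\int_t^{t+T}|u^T(s)w|\,\mathrm{d}s\geq\epsilon$ from Definition \ref{PE} into the left-hand side. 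Since $\phi$ is increasing, this gives $\epsilon^{p+1}\leq \frac{1}{T}\int_t^{t+T}|u^T(s)w|^{p+1}\,\mathrm{d}s$, and multiplying through by $T$ produces the claimed inequality. Because the PE bound holds uniformly in $t$ and for every unit $w$, so does the conclusion.

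There is essentially no obstacle beyond selecting the right tool; the only point that requires care is the normalization, namely that Jensen's inequality must be taken with respect to the probability measure $\frac{1}{T}\,\mathrm{d}s$ rather than plain Lebesgue measure, which is exactly what makes the powers of $T$ reassemble correctly. An entirely equivalent route is H\"older's inequality with conjugate exponents $p+1$ and $\frac{p+1}{p}$, which bounds $\int_t^{t+T}|u^T(s)w|\,\mathrm{d}s$ above by $\bigl(\int_t^{t+T}|u^T(s)w|^{p+1}\,\mathrm{d}s\bigr)^{1/(p+1)}\,T^{p/(p+1)}$ and rearranges to the same estimate; I would note this as an alternative but carry out the Jensen version as the primary argument.
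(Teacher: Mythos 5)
Your proof is correct and is essentially the paper's own argument: the paper applies H\"older's inequality with $g\equiv 1$ and conjugate exponents $p+1$ and $\frac{p+1}{p}$, which in this degenerate case is exactly the power-mean (Jensen) inequality you invoke, only written without the $\frac{1}{T}$ normalization. Indeed, the H\"older route you mention as an alternative is verbatim the paper's proof, so the two differ only in the name of the tool and the bookkeeping of the factor $T^p$.
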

	\begin{proof}
		Applying the Hölder inequality to $f(t)=|u^T(t)w|$ and $g(t)=1$ on the interval $[t,t+T]$ and using $\frac{1}{p+1}$ and $\frac{p}{p+1}$ as 
		Hölder conjugates we obtain
		\begin{small}
			\begin{align}
				T^p\int_{t}^{t+T}|u^T(s)w|^{p+1}\text{d}s\geq \left(\int_t^{t+T}|u^T(s)w|\text{d}s\right)^{p+1},\nonumber
			\end{align}
		\end{small}
		using Definition \ref{PE} leads to
		\begin{align}
			\int_{t}^{t+T}|u^T(s)w|^{p+1}\text{d}s\geq \frac{T^{p+1}}{T^p}\epsilon^{p+1}=T\epsilon^{p+1}.\nonumber
		\end{align}
	\end{proof}
	This derivation from the PE is done in the aim of easily present the proof of the theorems.
\section{Proof of Theorems \ref{TheoScalar1} to \ref{TVecC}}
	\label{TheoProofs}
	\subsection{Proof of Theorem \ref{TheoScalar1}}
		For $n=1$ the term $|u(t)^Tx(t)|$ can be rewritten as $|u(t)|\cdot|x(t)|=\sqrt{2}|u(t)|V^{\frac{1}{2}}(t)$ replacing this in \eqref{dotVa} 
		yields
		\begin{align}
			\dot{V}(t)=-2^{\frac{p+1}{2}}|u(t)|^{p+1}V^{\frac{p+1}{2}}(t).\nonumber
		\end{align}
		Solving the differential equation we have
		\begin{small}
		\begin{align}
			\label{SolV}
			V(t)=\left(V^{\frac{1-p}{2}}(t_0)-2^{\frac{p-1}{2}}(1-p)\int_{t_0}^t|u(s)|^{p+1}\text{d}s\right)^{\frac{2}{1-p}}.
		\end{align}
		\end{small}
		This solution is valid for $p\geq 0$ and $p\neq 1$. For $0\leq p<1$ the solution exists if $V^{\frac{1-p}{2}}(t_0)\geq $ 
		$2^{\frac{p-1}{2}}(1-p)\int_{t_0}^t|u(s)|^{p+1}\text{d}s$ after that $V(t)=0$. The PE guarantees that there exists $t_1\geq t_0$
		when the inequality no longer holds. To estimate this time it is sufficient to find an integer $k$ such that the integral of $|u(t)|^{p+1}$
		from $t_0$ to $t_0+kT$ is greater than $2^{\frac{1-p}{2}}/(1-p)V^{\frac{1-p}{2}}(t_0)$. Using Proposition \ref{PE_2}			
		\begin{align}
			\int_{t_0}^{t_0+kT}|u(s)|^{p+1}\text{d}s\geq kT\epsilon^{p+1}\geq\frac{2^{\frac{1-p}{2}}}{1-p}V^{\frac{1-p}{2}}(t_0).\nonumber
		\end{align}
		Solving for $k$ and taking the least integer that fulfills the inequality yields 		
		\begin{align}
			\label{time1}
			k=\left\lceil \frac{2^{\frac{1-p}{2}}}{(1-p)T\epsilon^{p+1}}V^{\frac{1-p}{2}}(t_0)\right\rceil,
		\end{align}
		then the time that guarantees that $V(t)$ reaches zero is $t_1=k\cdot T+t_0$. Notice that $V(t_0)$ can be replaced by $\frac{1}{2}|x(t_0)|^2$ 
		in \eqref{time1} to obtain \eqref{time0}.
	\subsection{Poof of Theorem \ref{TheoScalar2}}
		Consider again the solution for $V(t)$ in \eqref{SolV}. Since $p>1$, Equation \eqref{SolV} can be rewritten as
		\begin{align}
			V(t)=\frac{1}{\left(V^{\frac{1-p}{2}}(t_0)+2^{\frac{p-1}{2}}(p-1)\int_{t_0}^t|u(s)|^{p+1}\text{d}s\right)^{\frac{2}{p-1}}}.\nonumber
		\end{align}
		As $u(t)$ is of PE the denominator grows unbounded making $V(t)\to 0$ as $t\to\infty$. Now an estimate of the time needed for $V(t)$ to
		decreases from $V(t_0)$ to a value equal or smaller than $c$ is calculated. Substituting $V(t)$ for $c$ in \eqref{dotVa} and evaluating the 
		integral from $t_0$ to $t_1$ it is clear that the value of the integral needs to be larger enough to satisfy the next inequality
		\begin{align}
			\int_{t_0}^{t_1}|u(s)|^{p+1}\text{d}s\geq \frac{1}{2^{\frac{p-1}{2}}(p-1)}\frac{1}{c^{\frac{p-1}{2}}}
				-\frac{1}{V^{\frac{p-1}{2}}(t_0)}.\nonumber
		\end{align}
		Fixing $t_1-t_0$ as an integer multiple of $T$, i.e. $t_1-t_0=kT$, from Proposition \ref{PE_2} we know that the integral is greater or equal to 
		$kT\epsilon^{p+1}$. Forcing the RHS of the last inequality to be less than  $kT\epsilon^{p+1}$ we get
		\begin{align}
			kT\epsilon^{p+1}\geq \frac{1}{2^{\frac{p-1}{2}}(p-1)}\frac{1}{c^{\frac{p-1}{2}}}-\frac{1}{V^{\frac{p-1}{2}}(t_0)}.\nonumber
		\end{align}
		Now solving for $k$ and taking the smallest integer that fulfil the inequality yields			
		\begin{align}
			k&\geq\frac{1}{2^{\frac{p-1}{2}}(p-1)T\epsilon^{p+1}}\left(\frac{1}{c^{\frac{p-1}{2}}}-\frac{1}{V^{\frac{p-1}{2}}(t_0)}\right),\nonumber\\
			k&=\left\lceil\frac{1}{2^{\frac{p-1}{2}}(p-1)T\epsilon^{p+1}}\left(\frac{1}{c^{\frac{p-1}{2}}}
				-\frac{1}{V^{\frac{p-1}{2}}(t_0)}\right)\right\rceil.\nonumber
		\end{align}
		Taking the limit when $V(t_0)\to\infty$ the bound \eqref{time2} is found
		\begin{align}
			k&=\left\lceil \frac{1}{2^{\frac{p-1}{2}}(p-1)T\epsilon^{p+1}}\cdot\frac{1}{c^{\frac{p-1}{2}}}\right\rceil.\nonumber
		\end{align}
	\subsection{Proof of Theorem \ref{TheoScalar3}}
		From \eqref{dotVb} it follows that $\dot{V}(t)\leq -|u^T(t)x(t)|^{p_i+1}=-2^{\frac{p_i+1}{2}}|u(t)|V^{\frac{p_i+1}{2}}(t)$, which define a 
		different differential inequality for each term in the sum. From the Comparison Lemma we known that the solution of $V(t)$ is for below of each 
		solution of $V_{p_i}(t)$, where $\dot{V}_{p_i}(t)=-2^{\frac{p_i+1}{2}}|u(t)|V^{\frac{p_i+1}{2}}_i(t)$; $V_{p_i}(t)$ take the form of \eqref{SolV}. 
		Let $\textbf{P}_M$ and $\textbf{P}_m$ be as in the theorem. For each $p_i\in\textbf{P}_M$ we can assert that $V_{p_i}(t)$ can escape from infinity 
		to a compact set in finite time and so $V(t)$. Fix the level set as $V(x)=1$ and estimate the time needed to reach it
		\begin{align}
			\tau(1)\leq\left\lceil\frac{1}{2^{\frac{p_i-1}{2}}(p_i-1)T\epsilon^{p_i+1}}\right\rceil T,\ p_i\in\textbf{P}_M.\nonumber
		\end{align}
		The smallest time that the algorithm can guarantee is obtained when $(p_i-1)\epsilon^{p_i+1}$ is maximum. Take that quantity as the estimate. 
		Now, with $p_j\in\textbf{P}_m$, we can estimate the time needed for each $V_{p_j}(t)$ to converge from the level set $V(x)=1$ to zero
		\begin{align}
			\textbf{T}(x_0)\leq\left\lceil \frac{1}{(1-p_j)T\epsilon^{p_j+1}}\right\rceil T,\ p_j\in P_m, x_0\big| V(x_0)=1.\nonumber
		\end{align}
		Again, the smallest time that the algorithm can guarantee is when $(1-p_j)\epsilon^{p_j+1}$ is maximized. Then the time needed by the
		algorithm to converge is, at most, the sum of the two estimates.

	\subsection{Proof of Theorem \ref{TVecS}}
		\label{Proof1}
		Following the idea in \cite{AdapSys_Nar_Ann} a lower bound of the integral of $|u^T(t)x(t)|^{p+1}$ is needed. Take the term $|u^T(t)x(t_1)|$
		and add a zero in the form $u^T(t)x(t)-u^T(t)x(t)$ inside the absolute value, by means of the triangle inequality the next partition is obtained
		\begin{align}
			|u^T(t)x(t)|&\geq |u^T(t)x(t_1)|-|u^T(t)\big(x(t_1)-x(t)\big)|.\nonumber
		\end{align}
		Rising both sides to $p+1$ and using the Jensen inequality after that, the inequality yields 		
		\begin{align}
			|u^T(t)x(t)|^{p+1}\geq\frac{1}{2^p}&|u^T(t)x(t_1)|^{p+1}\nonumber\\
				&-|u^T(t)(x(t_1)-x(t))|^{p+1}.\nonumber
		\end{align}
		Integrating both sides from $t_1$ to $t_1+T$ and using Proposition \ref{PE_2} to bound the first term in the RHS becomes		
		\begin{small}		
		\begin{align}
			\int_{t_1}^{t_1+T}|u^T(t)&x(t)|^{p+1}\text{d}t\geq\frac{T\epsilon^{p+1}}{2^p}\|x(t_1)\|^{p+1}\nonumber\\
			\label{Ine01}
				&-\int_{t_1}^{t_1+T}|u^T(t)(x(t_1)-x(t))|^{p+1}\text{d}t.
		\end{align}
		\end{small}
		To bound the magnitude of the second term in the RHS of the last inequality, the next procedure may be used assuming $u(t)$ is uniformly bounded, 
		i.e. $\|u(t)\|\leq u_M$:
		\begin{small}		
		\begin{align}
			\int_{t_1}^{t_1+T}|u^T(s)&(x(t_1)-x(s))|^{p+1}\text{d}s\nonumber\\
				&\leq u_M^{p+1}\int_{t_1}^{t_1+T}\|x(t_1)-x(s)\|^{p+1}\text{d}s\nonumber\\
				&\leq u_M^{p+1}T\sup_{s\in[t_1,t_1+T]}\|x(t_1)-x(s)\|^{p+1}\nonumber\\
				\label{IneIndx}
				&\leq u_M^{p+1}T\Big(\int_{t_1}^{t_1+T}\|\dot{x}(s)\|\text{d}s\Big)^{p+1}.
		\end{align}
		\end{small}
		The norm of $\dot{x}$ in this case is $|u^T(t)x(t)|^p\|u(t)\|$ which is less than $|u^T(t)x(t)|^pu_M$. Hölder inequality can be used with
		$f(t)=|u^T(t)x(t)|^p$ and $g(t)=1$ in the interval $[t_1,t_1+T]$ with $\frac{p+1}{p}$ and $p+1$ as Hölder conjugates, then
		\begin{small}		
		\begin{align}
			T\Big(\int_{t_1}^{t_1+T}|u^T(s)x(s)|^{p+1}&\text{d}s\Big)^p\geq\nonumber\\
				&\left(\int_{t_1}^{t_1+T}|u^T(s)x(s)|^p\text{d}s\right)^{p+1}\nonumber
		\end{align}
		\end{small}
		Using this in \eqref{IneIndx} and then the result in \eqref{Ine01} one gets
		\begin{small}
		\begin{align}
			\int_{t_1}^{t_1+T}|u^T(t)&x(t)|^{p+1}\text{d}t\geq\frac{T\epsilon^{p+1}}{2^p}\|x(t_1)\|^{p+1}\nonumber\\
				&-T^2u_M^{2(p+1)}\left(\int_{t_1}^{t_1+T}|u^T(t)x(t)|^{p+1}\text{d}t\right)^p.\nonumber
		\end{align}
		\end{small}
		This can be rewritten as
		\begin{footnotesize}		
		\begin{align}
			\frac{T\epsilon^{p+1}}{2^p}\|x(t_1)\|^{p+1}\leq& \int_{t_1}^{t_1+T}|u^T(t)x(t)|^{p+1}\text{d}t\nonumber\\
			&+T^2u_M^{2(p+1)}\Big(\int_{t_1}^{t_1+T}|u^T(t)x(t)|^{p+1}\text{d}t\Big)^p,\nonumber
		\end{align}
		\end{footnotesize}
		By algebraic manipulation and defining $\textbf{z}:=\int_{t_1}^{t_1+T}|u^T(t)x(t)|^{p+1}\text{d}t$ the notation can be simplified:
		\begin{align}
			\textbf{z}+T^2u_M^{2(p+1)}\textbf{z}^p\geq\frac{T\epsilon^{p+1}}{2^p}\|x(t_1)\|^{p+1}.\nonumber
		\end{align}
		Notice that the polynomial $P(\textbf{z}):=\textbf{z}+T^2u_M^{2(p+1)}\textbf{z}^p$ is a strict monotonically increasing function for 
		$\textbf{z}\geq 0$ and its inverse exists. By denoting this as $P^{-1}(\textbf{z})$ and using in the inequality above, one gets
		\begin{small}
			\begin{align}
				\textbf{z}=\int_{t_1}^{t_1+T}|u^T(t)x(t)|^{p+1}\text{d}t\geq P^{-1}\left(\frac{T\epsilon^{p+1}}{2^p}\|x(t_1)\|^{p+1}\right).\nonumber
			\end{align}
		\end{small}
		Recalling $\dot{V}(t)$ from \eqref{dotVa} and integrating it for the same time interval it is obvious that
		\begin{align}
			\int_{t_1}^{t_1+T}\dot{V}(t)\text{d}t\leq-P^{-1}\left(\frac{T\epsilon^{p+1}}{2^p}\|x(t_1)\|^{p+1}\right)<0.\nonumber
		\end{align}
		Since $P^{-1}(\cdot)$ is also a strict monotonically increasing function and from Theorem 5 in \cite{Nar_Ann_87} the global uniform asymptotic
		stability of the origin of \eqref{ErrDyn00} is stablished for any $p>0$.

	\subsection{Proof of Theorem \ref{TVecC}}
		In \eqref{dotVb} several terms of the form $|u^T(t)x(t)|^{p_i+1}$ appear; an analysis of each term separately is done before study the full 
		derivative of $V$. By taking the term $|u^T(t)x(t_1)|$ and adding a zero in the form $u^T(t)x(t)-u^T(t)x(t)$ inside the absolute sign and then 
		applying the triangle inequality the next partition is found
		\begin{align}
			|u^T(t)x(t_1)|\leq|u^T(t)(x(t_1)-x(t))|+|u^T(t)x(t)|,\nonumber
		\end{align}
		rising to $p_i+1$ and using the Jensen Inequality		
		\begin{align}
			\frac{1}{2^{p_i}}|u^T(t)x(t_1)|^{p_i+1}\leq&|u^T(t)(x(t_1)-x(t))|^{p_i+1}\nonumber\\
				&+|u^T(t)x(t)|^{p_i+1}.\nonumber
		\end{align}
		Solving for $|u^T(t)x(t)|^{p_i+1}$ and integrating between $[t_1,t_1+T]$
		\begin{footnotesize}
		\begin{align}
			\int_{t_1}^{t_1+T}|u^T(s)x(s)|^{p_i+1}\text{d}s&\geq\frac{1}{2^{p_i}}\int_{t_1}^{t_1+T}|u^T(s)x(t_1)|^{p_i+1}\text{d}s\nonumber\\
				-&\int_{t_1}^{t_1+T}|u^T(s)(x(t_1)-x(s))|^{p_i+1}\text{d}s.\nonumber
		\end{align}
		\end{footnotesize}
		A lower bound of the first term in the RHS of the inequality can be found using Proposition \ref{PE_2}. The following inequality is obtained
		\begin{footnotesize}
		\begin{align}
			\label{IneM}
			\int_{t_1}^{t_1+T}|u^T(s)x(s)|^{p_i+1}\text{d}s&\geq\frac{T\epsilon^{p_i+1}}{2^{p_i}}\|x(t_1)\|^{p_i+1}\nonumber\\
				-&\int_{t_1}^{t_1+T}|u^T(s)(x(t_1)-x(s))|^{p_i+1}\text{d}s.
		\end{align}
		\end{footnotesize}
		A upper bound of the magnitude of the second term is quite more difficult. Assuming $u(t)$ is uniformly bounded by $u_M$, i.e. 
		$\|u(t)\|\leq u_M$, the integral can be bounded as
		\begin{align}
			\int_{t_1}^{t_1+T}|u^T(s)&(x(t_1)-x(s))|^{p_i+1}\text{d}s\nonumber\\
				&\leq u_M^{p_i+1}\int_{t_1}^{t_1+T}\|x(t_1)-x(s)\|^{p_i+1}\text{d}s\nonumber\\
				&\leq u_M^{p_i+1}T\sup_{s\in[t_1,t_1+T]}\|x(t_1)-x(s)\|^{p_i+1}\nonumber\\
			\label{IneNdx}
				&\leq u_M^{p_i+1}T\Big(\int_{t_1}^{t_1+T}\|\dot{x}(s)\|\text{d}s\Big)^{p_i+1}.
		\end{align}
		Now an analysis of the integral of $\|\dot{x}(t)\|$ is required. First the norm of $\dot{x}$ is estimated
		\begin{small}
		\begin{align}
			\|\dot{x}\|&=\Big|\sum_{j=1}^h\lceil u^T(t)x(t)\rfloor^{p_j}\Big|\|u(t)\|
				\leq u_M\sum_{j=1}^h|u^T(t)x(t)|^{p_j}.\nonumber
		\end{align}
		\end{small}
		Integrating and using the Jensen inequality for the function $(\cdot)^{p_i+1}$	
		\begin{footnotesize}
		\begin{align}
			\label{IneNdx1}
			\Big(\int_{t_1}^{t_1+T}\|\dot{x}(s)&\|\text{d}s\Big)^{p_i+1}\leq\nonumber\\
				&u_M^{p_i+1}h^{p_i}\sum_{j=1}^h\left(\int_{t_1}^{t_1+T}|u^T(s)x(s)|^{p_j}\text{d}s\right)^{p_i+1}.
		\end{align}
		\end{footnotesize}
		The restriction $p_M\leq p_m+1$ guarantees that $\frac{p_i+1}{p_j}\geq 1$ for every $(i,j)$ and then 
		$\frac{p_i+1}{p_j}$, $\frac{p_i+1}{p_i+1-p_j}$ are Hölder conjugates. Now the Hölder inequality is used with $f(t)=|u^T(t)x(t)|^{p_j}$ and 
		$g(t)=1$ in the interval $[t_1,t_1+T]$ and Hölder conjugates as described before	
		\begin{footnotesize}
		\begin{align}
			\label{IneNdx2}
			T^{p_i-p_j+1}\Big(\int_{t_1}^{t_1+T}|u^T(s)x(&s)|^{p_i+1}\text{d}s\Big)^{p_j}\geq \nonumber\\
				&\Big(\int_{t_1}^{t_1+T}|u^T(s)x(s)|^{p_j}\text{d}s\Big)^{p_i+1}.
		\end{align}
		\end{footnotesize}
		Using \eqref{IneNdx} in \eqref{IneNdx1}, applying \eqref{IneNdx2} in the sum, one has
		\begin{footnotesize}
		\begin{align}
			\label{IneNdx3}
			u_M^{2(p_i+1)}h^{p_i}T^{p_i+2}\sum_{j=1}^{h}\frac{1}{T^{p_j}}&\Big(\int_{t_1}^{t_1+T}|u^T(s)x(s)|^{p_i+1}\text{d}s\Big)^{p_j}\nonumber\\
			\geq& u_M^{p_i+1}T\Big(\int_{t_1}^{t_1+T}\|\dot{x}(s)\|\text{d}s\Big)^{p_i+1}.				
		\end{align}
		\end{footnotesize}
		From \eqref{IneM}, \eqref{IneNdx} and \eqref{IneNdx3} we obtain
		\begin{footnotesize}
		\begin{align}
			\int_{t_1}^{t_1+T}&|u^T(s)x(s)|^{p_i+1}\text{d}s\geq\frac{T\epsilon^{p_i+1}}{2^{p_i}}\|x(t_1)\|^{p_i+1}\nonumber\\
				-&u_M^{2(p_i+1)}h^{p_i}T^{p_i+2}\sum_{j=1}^{h}\frac{1}{T^{p_j}}\Big(\int_{t_1}^{t_1+T}|u^T(s)x(s)|^{p_i+1}\text{d}s\Big)^{p_j}.\nonumber
		\end{align}
		\end{footnotesize}
		Defining $\textbf{z}_i:=\int_{t_1}^{t_1+T}|u^T(s)x(s)|^{p_i+1}\text{d}s$ the last inequality can be rewritten as
		\begin{small}
		\begin{align}
			\textbf{z}_i+u_M^{2(p_i+1)}h^{p_i}T^{p_i+2}\sum_{j=1}^{h}\frac{1}{T^{p_j}}\textbf{z}_i^{p_j}\geq 
				\frac{T\epsilon^{p_i+1}}{2^{p_i}}\|x(t_1)\|^{p_i+1}.\nonumber
		\end{align}
		\end{small}
		The polynomial in the LHS is a strict monotonically increasing function and zero when $\textbf{z}_i=0$, then its inverse exists and also is a
		strict monotonically increasing function. By denoting the polynomial as $P_i$ and its inverse as $P_i^{-1}$ one gets	
		\begin{footnotesize}		
		\begin{align}
			\label{InePol}
			\int_{t_1}^{t_1+T}|u^T(s)x(s)|^{p_i+1}\text{d}s\geq P_i^{-1}\left(\frac{T\epsilon^{p_i+1}}{2^{p_i}}\|x(t_1)\|^{p_i+1}\right).
		\end{align}
		\end{footnotesize}
		Now integrating \eqref{dotVb} from $t_1$ to $t_1+T$ and using \eqref{InePol} the next inequality for $\dot{V}$ is obtained
		\begin{small}		
		\begin{align}
			\int_{t_1}^{t_1+T}\dot{V}(s)\text{d}s&=-\sum_{i=1}^{h}\int_{t_1}^{t_1+T}|u^T(s)x(s)|^{p_i+1}\text{d}s\nonumber\\
					&\leq -\sum_{i=1}^hP_i^{-1}\left(\frac{T\epsilon^{p_i+1}}{2^{p_i}}\|x(t_1)\|^{p_i+1}\right).\nonumber
		\end{align}
		\end{small}
		This satisfies the assumptions of the Theorem 5 in \cite{Nar_Ann_87} and guarantees the global uniform asymptotic stability of the 
		origin of \eqref{ErrDyn01}.

\end{document}